\newcommand{\R}{\mathbb{R}}
\newcommand{\C}{\mathbb{C}}
\renewcommand{\Re}{\mbox{Re}\,}
\renewcommand{\Im}{\mbox{Im}\,}
\newtheorem{theo}{Theorem}
\newtheorem{lem}[theo]{Lemma}
\newtheorem{prop}[theo]{Proposition}
\newtheorem{rem}[theo]{Remark}
\newtheorem{example}[theo]{Example}
\title[FHC for weighted composition semigroups]{A remark on the Frequent Hypercyclicity Criterion for weighted composition semigroups and an application to the linear von Foerster-Lasota equation}
\author{T.\ Kalmes}
\thanks{\begin{sc} \hspace{-.5cm}Technische Universit\"at Chemnitz,
Fakult\"at f\"ur Mathematik, 09107 Chemnitz, Germany,\end{sc} \begin{it}E-mail address:\end{it} thomas.kalmes@mathematik.tu-chemnitz.de}
\begin{document}

\begin{abstract}
We generalize a result for the translation $C_0$-semigroup on $L^p(\R_+,\mu)$ about the equivalence of being chaotic and satisfying the Frequent Hypercyclicity criterion due to Mangino and Peris \cite{MaPe11} to certain weighted composition $C_0$-semigroups. Such $C_0$-semigroups appear in a natural way when dealing with initial value problems for linear first order partial differential operators. We apply our result to the linear von Foerster-Lasota equation arising in mathematical biology. Weighted composition $C_0$-semigroups on Sobolev spaces are also considered.
\end{abstract}

\maketitle

\section{Introduction}

The purpose of this article is to generalize a result about the dynamical behaviour of the translation $C_0$-semigroup on $L^p(\R_+,\mu)$ to a larger class of certain weighted composition $C_0$-semigroups on $L^p(\Omega,\mu)$, where $\mu$ is a Borel measure on an open subset $\Omega$ of $\R$ admitting a strictly positive Lebesgue density $\rho$. Recall that a $C_0$-semigroup $T$ on a separable Banach space $X$ is called {\it chaotic} if $T$ is hypercyclic, i.e.\ if there is $x\in X$ such that $\{T(t)x;\,t\geq 0\}$ is dense in $X$, and if the set of periodic points, i.e.\ $\{x\in X;\,\exists t>0: T(t)x=x\}$, is dense in $X$.

Adapting the notion of frequent hypercyclicity, which was introduced by Bayart and Grivaux in 2005 for a single operator \cite{BaGr05}, a $C_0$-semigroup $T$ is called {\it frequently hypercyclic} if there is $x\in X$ such that for every non-empty, open $U\subseteq X$
\[\underline{\mbox{dens}}\{t\geq 0;\,T(t)x\in U\}>0,\]
where the lower density $\underline{\mbox{dens}}(M)$ of a Lebesgue measurable subset $M$ of $[0,\infty)$ is defined as
\[\underline{\mbox{dens}}(M):=\liminf_{t\rightarrow\infty}\frac{\lambda(M\cap [0,t])}{t},\]
where $\lambda$ denotes Lebesgue measure. In \cite{MaPe11}, Mangino and Peris gave a sufficient condition for a $C_0$-semigroup to be frequently hypercyclic, the so-called Frequent Hypercyclicity criterion. It was shown by Murillo-Arcila and Peris in \cite{MuPe14} that the hypotheses of the Frequent Hypercyclicity criterion even imply the existence of a strongly mixing Borel probability measure with full support for the $C_0$-semigroup $T$. Moreover, by \cite[Proposition 2.6]{MaPe11}, for every $t_0>0$, the operator $T(t_0)$ is chaotic whenever the $C_0$-semigroup $T$ satisfies the Frequent Hypercyclicity criterion. In particular, $T$ is then chaotic. By \cite[Proposition 3.3]{MaPe11}, the translation $C_0$-semigroup on $L^p(\R_+,\mu)$, defined by $T(t)f(x)=f(x+t)$, satisfies the Frequent Hypercyclicity criterion if and only if the translation $C_0$-semigroup is chaotic.\\

For $\Omega\subseteq\R$ open and a Borel measure $\mu$ on $\Omega$ admitting a strictly positive Lebesgue density $\rho$ we consider $C_0$-semigroups $T$ on
$L^p(\Omega,\mu), 1\leq p<\infty,$ of the form
\[T(t)f(x)=h_t(x) f(\varphi(t,x)),\]
where $\varphi$ is the solution semiflow of an ordinary differential equation
\[\dot{x}=F(x)\]
in $\Omega$ and
\[h_t(x)=\exp\big(\int_0^t h(\varphi(s,x))ds\big)\]
with $h\in C(\Omega)$. Such $C_0$-semigroups appear in a natural way when dealing with initial value problems for linear first order partial differential operators. Under mild assumptions on $F$ and $h$, in section 2 we prove that, as in the case of the translation $C_0$-semigroup, these $C_0$-semigroups satisfy the Frequent Hypercyclicity criterion if and only if they are chaotic.

As an application of this result we show that for the von Foerster-Lasota $C_0$-semigroup on $L^p(0,1)$ the properties of being hypercyclic, chaotic, or frequently hypercyclic are equivalent.

In section 3, we also consider weighted composition $C_0$-semigroups on the closed subspace
\[W^{1,p}_*[a,b]=\{f\in W^{1,p}[a,b];\,f(a)=0\}\]
of the Sobolev space $W^{1,p}[a,b]$ over the compact interval $[a,b]$, and we prove that the same equivalence between chaos and the Frequent Hypercyclicity criterion holds on $W^{1,p}_*[a,b]$. It should be mentioned that such semigroups are never hypercyclic in the whole space $W^{1,p}[a,b]$, as shown in \cite{ArKaMa13}.

\section{The Frequent Hypercyclicity Criterion applied to weighted composition $C_0$-semigroups}

A sufficient condition for a $C_0$-semigroup $T$ on a separable Banach space $X$ to be frequently hypercyclic, the so called Frequent Hypercyclicity criterion, is the following result due to Mangino and Peris, see \cite[Theorem 2.2]{MaPe11}.

\begin{theo}\label{FHC}
Let $T$ be a $C_0$-semigroup on a separable Banach space $X$. If there exist $X_0\subset X$ dense in $X$ and maps $S_t:X_0\rightarrow X, t\geq 0,$ such that
\begin{itemize}
\item[i)] $T(t)S_tx=x, T(t)S_rx=S_{r-t}x$ for all $r>t>0$ and $x\in X_0$,
\item[ii)] $t\mapsto T(t)x$ is Pettis integrable in $[0,\infty)$ for all $x\in X_0$,
\item[iii)] $t\mapsto S_tx$ is Pettis integrable in $[0,\infty)$ for all $x\in X_0$,
\end{itemize}
then $T$ is frequently hypercyclic.
\end{theo}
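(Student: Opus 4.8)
This is the Frequent Hypercyclicity Criterion of Mangino and Peris, and the natural approach is to transfer the argument Bayart and Grivaux gave for a single operator in \cite{BaGr05} to continuous time, producing a frequently hypercyclic vector $x$ by hand as a weighted superposition of the vectors $S_s y_k$. Here $(y_k)_{k\in\N}$ is a sequence in $X_0$ in which each member of a fixed countable dense subset of $X_0$ (hence of $X$) occurs infinitely often, and $(\varepsilon_k)$ is a prescribed null sequence. Two analytic inputs are needed. First, since $T$ is strongly continuous there is, for each $k$, a number $\delta_k>0$ with $\|T(w)y_k-y_k\|<\varepsilon_k$ for all $0\le w\le 2\delta_k$; importantly this uses only $T$, not the family $(S_t)$. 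Second, by ii) and iii) the set functions $E\mapsto\int_E T(t)y_k\,dt$ and $E\mapsto\int_E S_t y_k\,dt$ are countably additive $X$-valued vector measures on the Lebesgue measurable subsets of $[0,\infty)$, so their semivariations are continuous at $\emptyset$; consequently
\[\sup\Bigl\{\bigl\|\int_E T(t)y_k\,dt\bigr\|+\bigl\|\int_E S_t y_k\,dt\bigr\| : E\subseteq[N,\infty)\Bigr\}\longrightarrow 0\qquad(N\to\infty),\]
which is the substitute for the unconditional convergence of $\sum_n T^ny_k$ and $\sum_n S^ny_k$ used in the discrete criterion.

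Next I would carry out the combinatorial step: pick integers $N_k\uparrow\infty$ and base points $b_{k,1}<b_{k,2}<\cdots\to\infty$ such that the sets $A_k:=\bigcup_{j\ge1}[b_{k,j},b_{k,j}+\delta_k]$ have positive lower density while $|b_{k,j}-b_{k',j'}|\ge N_k+N_{k'}$ whenever $(k,j)\ne(k',j')$; this is the continuous version of the Bayart--Grivaux separation lemma, obtained by thickening its discrete form. In addition I would take each $N_k$ so large that, for \emph{every} index $\ell\le k$, the $N_k$-tails in the displayed estimate for $y_\ell$ are $<\delta_\ell\,2^{-k}$, and $b_{k,1}$ so large that
\[x:=\sum_{k\ge1}\frac1{\delta_k}\int_{A_k'}S_s y_k\,ds,\qquad A_k':=\bigcup_{j\ge1}[b_{k,j}-\delta_k,\,b_{k,j}],\]
converges, with the $k$-th summand of norm $<2^{-k}$. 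The block of index $k$ is deliberately supported on the intervals lying immediately to the \emph{left} of those forming $A_k$; this offset, together with the normalising factor $1/\delta_k$, is where one pays for the fact that in continuous time a single instant has no Lebesgue mass.

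To see that $x$ works, fix $k$ and $t=b_{k,j_0}+\sigma$ with $0\le\sigma\le\delta_k$. Using that a bounded operator commutes with Pettis integrals, $T(t)x=\sum_{k',j}\frac1{\delta_{k'}}\int T(t)S_s y_{k'}\,ds$, and since, by the separation, every interval underlying some $A_{k'}'$ lies entirely on one side of $[b_{k,j_0},b_{k,j_0}+\delta_k]$, one may insert termwise the identities $T(t)S_s y_{k'}=T(t-s)y_{k'}$ for $s<t$ and $T(t)S_s y_{k'}=S_{s-t}y_{k'}$ for $s>t$, which follow from i) and the semigroup law. The block with $(k',j)=(k,j_0)$ becomes, after the substitution $w=t-s$, exactly $\delta_k^{-1}\int_\sigma^{\sigma+\delta_k}T(w)y_k\,dw$, hence is within $\varepsilon_k$ of $y_k$ by the choice of $\delta_k$. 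Every other block turns into $\delta_{k'}^{-1}\int_E T(w)y_{k'}\,dw$ or $\delta_{k'}^{-1}\int_E S_w y_{k'}\,dw$ with $E\subseteq[\max(N_k,N_{k'}),\infty)$; summing over $j$ disjointly for fixed $k'$, and then over $k'$ split into $k'<k$, $k'=k$, $k'>k$, the tail estimates (this is where the choice of $N_k$ beats down the finitely many contributions from indices below $k$) bound the total by a quantity tending to $0$ as $k\to\infty$. Therefore $\eta_k:=\sup_{t\in A_k}\|T(t)x-y_k\|\to0$, and since every non-empty open $U\subseteq X$ contains some $y_k$ with $\eta_k$ less than the radius of a ball around $y_k$ contained in $U$, one gets $\{t\ge0:T(t)x\in U\}\supseteq A_k$, a set of positive lower density; thus $x$ is frequently hypercyclic.

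The genuinely delicate point is this combinatorial/measure-theoretic bookkeeping: achieving positive lower density and arbitrarily wide separation of the blocks at once, and ordering the parameters so that at scale $k$ not only the $k$-th block but also each of the finitely many blocks with smaller index is negligible --- which forces $N_k$ to dominate the tails of all earlier blocks, not just its own. Everything else (commuting $T(t)$ past the Pettis integrals, the changes of variables, and deducing frequent hypercyclicity from ``$T(t)x\approx y_k$ for $t\in A_k$'') is routine.
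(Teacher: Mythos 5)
The paper does not prove Theorem \ref{FHC} at all: it is quoted verbatim from Mangino and Peris \cite[Theorem 2.2]{MaPe11}, so there is no internal proof to compare against. Your reconstruction --- the continuous-time Bayart--Grivaux construction, with the tail control coming from countable additivity of the indefinite Pettis integrals (via a control measure), the thickened separation lemma with prescribed gaps $N_k+N_{k'}$, the left-shifted blocks averaged with the factor $1/\delta_k$, and the ordering of the parameters so that $N_k$ also beats the tails of all earlier blocks --- is sound and is essentially the argument of the cited source; I see no genuine gap, only routine slack to absorb (e.g.\ the interval widths $\delta_k$ in the separation estimates and the endpoint $s=t$, which is a null set).
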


Let $\Omega\subseteq\R$ be open and let $F:\Omega\rightarrow\R$ be a $C^1$-function. Hence, for every $x_0\in\Omega$ there is a unique solution $\varphi(\cdot,x_0)$ of the initial value problem
\[\dot{x}=F(x),\; x(0)=x_0.\]
Denoting its maximal domain of definition by $J(x_0)$ it is well-known that $J(x_0)$ is an open interval containing $0$. We make the general assumption that $\Omega$ is {\it forward invariant under} $F$, i.e.\
$[0,\infty)\subset J(x_0)$ for every $x_0\in\Omega$, that is $\varphi:[0,\infty)\rightarrow\Omega$. This is true, for example, if $\Omega=(a,b)$ is a bounded interval and if $F$ can be extended to a $C^1$-function defined on a neighborhood of $[a,b]$ such that $F(a)\geq 0$ and $F(b)\leq 0$ (cf.\ \cite[Corollary 16.10]{Amann}).

From the uniqueness of the solution it follows that $\varphi(t,\cdot)$ is injective for
every $t\geq 0$ and $t+s\in J(x)$ whenever $s\in J(x), t\in J(\varphi(s,x))$ and then $\varphi(t+s,x)=\varphi(t,\varphi(s,x))$. Moreover, for every $t\geq 0$ the set $\varphi(t,\Omega)$ is open and for $x\in\varphi(t,\Omega)$
we have $[-t,\infty)\subset J(x)$ as well as $\varphi(-s,x)=\varphi(s,\cdot)^{-1}(x)$ for all $s\in [0,t]$. Since $F$ is a $C^1$-function it is well-known that the same is true for
$\varphi(t,\cdot)$ on $\Omega$ and $\varphi(-t,\cdot)$ on $\varphi(t,\Omega)$ for every $t\geq 0$. We denote their derivatives by $\partial_2\varphi(t,x)$ and $\partial_2\varphi(-t,x)$, respectively. We then have $\partial_2\varphi(-t,\varphi(t,x))\partial_2\varphi(t,x)=1=\partial_2\varphi(t,\varphi(-t,x))\partial_2\varphi(-t,x)$.

Because $F\in C^1(\Omega)$ it is well known that for $x\in\Omega$ and $t\in J(x)$ there is a neighborhood $U$ of $x$ in $\Omega$ such that $t\in J(y)$ for every $y\in U$ and that $\varphi(t,\cdot)$ is differentiable in $U$. We denote its derivative by $\partial_2\varphi(t,\cdot)$. We then have
\begin{equation}\label{positive}
\forall\,t\geq 0:\,\partial_2\varphi(t,x)=\exp(\int_0^tF'(\varphi(s,x))ds)>0
\end{equation}
and
\[\forall\,r\geq 0, x\in\varphi(r,\Omega), t\in[0,r]:\,\partial_2\varphi(-t,x)=\exp(-\int_{-t}^0 F'(\varphi(s,x)ds),\]
see e.g.\ \cite[Proposition 11]{ArKaMa13}.

Moreover, let $h\in C(\Omega)$ and define for $t\ge 0$
\[h_t:\Omega\rightarrow\C, h_t(x)=\exp(\int_0^t h(\varphi(s,x))ds).\]
For $1\leq p<\infty$ and a measurable function $\rho:\Omega\rightarrow (0,\infty)$ let $L^p_\rho(\Omega)$ be as usual
the Lebesgue space of $p$-integrable functions with respect to the Borel measure $\rho d\lambda$, where $\lambda$
denotes Lebesgue measure. If $\Omega$ is forward invariant under $F$ the operators
\[T(t):L^p_\rho(\Omega)\rightarrow L^p_\rho(\Omega), (T(t)f)(x):=h_t(x)f(\varphi(t,x))\;(t\geq 0)\]
are well-defined continuous linear operators defining a $C_0$-semigroup $T_{F,h}$ on $L^p_\rho(\Omega)$ if $\rho$ is
$p$-{\it admissible for} $F$ {\it and} $h$, i.e.\ if there are constants $M\geq 1$, $\omega\in\R$ with
\[\forall\,t\geq 0,x\in\Omega:\,|h_t(x)|^p\rho(x)\leq M e^{\omega t}\rho(\varphi(t,x))\exp(\int_0^t F'(\varphi(s,x))ds),\]
(see \cite{ArKaMa13}). Because $|h_t(x)|^p=\exp(p\int_0^t\Re h(\varphi(s,x))ds)$ it follows that $\rho=1$ is 
$p$-admissible for any $p$ if $\Re h$ is bounded above and $F'$ is bounded below, i.e.\ in this case the above operators
define a $C_0$-semigroup $T_{F,h}$ on the standard Lebesgue spaces $L^p(\Omega)$. Under mild additional assumptions on $F$ and $h$ the generator of  this $C_0$-semigroup is given by the first order differential operator $Af=F f'+hf$ on a suitable subspace of $L^p(\Omega)$ (see \cite[Theorem 15]{ArKaMa13}).

If $\rho$ is $p$-admissible for $F$ and $h$ it follows from \cite[Lemma 7]{Ka09} that $\chi_I\in L^p_\rho(\Omega)$ for each compact interval $I\subset\Omega\backslash\{F=0\}$. The main result of this article is the following theorem.

\begin{theo}\label{equivalence}
Let $\Omega\subseteq\R$ be an open interval which is forward invariant under $F\in C^1(\Omega)$, and let $h\in C(\Omega)$ be such that $F'$ and $\Re h$ are bounded and
\begin{itemize}
	\item[a)] There is $\gamma\in\R$ such that $h(x)=\gamma$ for all $x\in\{F=0\}$.
	\item[b)] With $\alpha:=\inf\Omega$ and $\omega:=\sup\Omega$ the function
	\[\Omega\rightarrow\C,y\mapsto\frac{\Im h(y)}{F(y)}\]
	belongs to $L^1((\alpha,\beta))$ for all $\beta\in\Omega$ or to $L^1((\beta,\omega))$ for all $\beta\in\Omega$.
\end{itemize}
Moreover, let $\rho$ be $p$-admissible for $F$ and $h$. Then the following are equivalent.
	\begin{itemize}
		\item[i)] $T_{F,h}$ is chaotic in $L^p_\rho(\Omega)$.
		\item[ii)] $X_0:=\mbox{span}\{\chi_I;\,I\subseteq\Omega\backslash\{F=0\}\mbox{ compact interval}\}$ is a dense subspace of $L^p_\rho(\Omega)$ and $T_{F,h}$ satisfies the Frequent Hypercyclicity criterion with $X_0$
		and linear mappings $S_t:X_0\rightarrow L^p_\rho(\Omega)$ which are given by
		\[S_t\chi_I:=\chi_{\varphi(t,\Omega)}(\cdot)h_t(\varphi(-t,\cdot))^{-1}\chi_I(\varphi(-t,\cdot))\]
		for a closed interval $I\subset\Omega\backslash\{F=0\}$.
		\item[iii)] $T_{F,h}$ satisfies the Frequent Hypercyclicity criterion.
	\end{itemize}
\end{theo}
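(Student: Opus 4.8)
The implications (ii)$\Rightarrow$(iii) and (iii)$\Rightarrow$(i) are the easy ones: the first is trivial and the second is \cite[Proposition 2.6]{MaPe11}. So the content of the theorem is the implication (i)$\Rightarrow$(ii), which I would prove as follows. First, already hypercyclicity of $T_{F,h}$ forces $\lambda(\{F=0\})=0$: the set $\{F=0\}$ is closed and $\varphi$-invariant, so $Y:=\{f\in L^p_\rho(\Omega);\ f=0\text{ a.e.\ on }\Omega\setminus\{F=0\}\}$ is a closed $T(t)$-invariant subspace, and by a) the restriction of $T_{F,h}$ to $Y$ equals $e^{\gamma t}\,\mathrm{Id}_Y$; since multiplication by $\chi_{\{F=0\}}$ is a bounded surjection of $L^p_\rho(\Omega)$ onto $Y$ intertwining $T_{F,h}$ with this restriction, a hypercyclic vector of $T_{F,h}$ would produce one for $e^{\gamma t}\,\mathrm{Id}_Y$, whence $Y=\{0\}$. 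Thus $\Omega\setminus\{F=0\}$ is open and dense, and since $\rho\,d\lambda$ is absolutely continuous and $\chi_I\in L^p_\rho(\Omega)$ for every compact interval $I\subset\Omega\setminus\{F=0\}$ by \cite[Lemma 7]{Ka09}, the density of $X_0$ follows from the density of step functions in $L^p_\rho(\Omega)$.

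Next I would check that the $S_t$ are well defined and that i) of Theorem \ref{FHC} holds. By \eqref{positive} each $\varphi(t,\cdot)$ is an increasing homeomorphism of $\Omega$ onto the open set $\varphi(t,\Omega)$, and $\Omega\setminus\{F=0\}$ is $\varphi$-invariant (if the orbit of $x$ meets $\{F=0\}$, then $x\in\{F=0\}$ by uniqueness). Hence for a compact interval $I\subset\Omega\setminus\{F=0\}$ one has $S_t\chi_I=h_t(\varphi(-t,\cdot))^{-1}\chi_{\varphi(t,I)}$, with $\varphi(t,I)$ a compact subinterval of $\Omega\setminus\{F=0\}$ on which $x\mapsto h_t(\varphi(-t,x))^{-1}$ is continuous, hence bounded; so $S_t\chi_I\in L^p_\rho(\Omega)$ and $S_t$ extends linearly to $X_0$. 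For $x=\chi_I$ the identities $T(t)S_tx=x$ and $T(t)S_rx=S_{r-t}x$ ($r>t>0$) then follow immediately from the cocycle relations $h_{t+s}=h_t\cdot(h_s\circ\varphi(t,\cdot))$, $\varphi(-t,\varphi(t,\cdot))=\mathrm{id}$ and $\varphi(r,\varphi(-t,\cdot))=\varphi(r-t,\cdot)$.

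The Pettis integrability is the heart of the matter. By linearity it suffices to treat $x=\chi_I$ with $I$ a compact interval contained in one connected component $J$ of $\Omega\setminus\{F=0\}$, on which $F$ has constant sign. Fix $x_0\in I$ and put $\psi(z):=\int_{x_0}^z dy/F(y)$ for $z\in J$; then $\psi$ is a $C^1$-diffeomorphism of $J$ onto an interval $(\ell,\infty)$ with $\ell\in[-\infty,0)$ --- forward invariance of $\Omega$ under $F$ rules out a finite upper end-point --- satisfying $\psi(\varphi(t,z))=\psi(z)+t$ and $\psi^{-1}(\tau)=\varphi(\tau,x_0)$. With $n(\tau):=h_\tau(x_0)$ and $v(\tau):=|n(\tau)|^{-p}\,\rho(\psi^{-1}(\tau))\,|F(\psi^{-1}(\tau))|$, the map $Uf:=n\cdot(f\circ\psi^{-1})$ is an isometric isomorphism of $L^p_\rho(J)$ onto $L^p_v((\ell,\infty))$, and a short computation based on \eqref{positive} turns the $p$-admissibility of $\rho$ into admissibility of $v$ for the translation semigroup. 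The cocycle relations of the previous paragraph show that $L^p_\rho(J)$ is invariant under $T_{F,h}$ and under the $S_t$, and that $U$ conjugates $T_{F,h}|_{L^p_\rho(J)}$ and these $S_t$ to the translation semigroup $g\mapsto g(\cdot+t)$ on $L^p_v((\ell,\infty))$ and to its canonical right inverses $g\mapsto\chi_{(\ell+t,\infty)}\,g(\cdot-t)$. Finally, multiplication by $\chi_J$ is a bounded surjection of $L^p_\rho(\Omega)$ onto $L^p_\rho(J)$ commuting with $T_{F,h}$ (because $J$ is $\varphi$-invariant), and therefore maps hypercyclic and periodic vectors to hypercyclic, resp.\ periodic, vectors; hence $T_{F,h}|_{L^p_\rho(J)}$, and with it the translation semigroup on $L^p_v((\ell,\infty))$, is chaotic. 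By \cite[Proposition 3.3]{MaPe11} --- applied after the affine identification $(\ell,\infty)\cong\R_+$ when $\ell>-\infty$, and by the same elementary reasoning when $\ell=-\infty$ --- chaos of this translation semigroup yields $v\in L^1((\ell,\infty))$; and then, for every bounded compactly supported $g$ and every measurable $M\subseteq[0,\infty)$, the functions $\tau\mapsto\int_M g(\tau+t)\,dt$ and $\tau\mapsto\int_M\chi_{(\ell+t,\infty)}(\tau)\,g(\tau-t)\,dt$ are bounded with support bounded on one side, hence lie in $L^p_v((\ell,\infty))$, so that $t\mapsto g(\cdot+t)$ and $t\mapsto\chi_{(\ell+t,\infty)}\,g(\cdot-t)$ are Pettis integrable on $[0,\infty)$. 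Applying this with $g=U\chi_I$ and transporting back through $U$ gives the Pettis integrability of $t\mapsto T(t)\chi_I$ and $t\mapsto S_t\chi_I$ asserted in ii).

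The genuine obstacle is this last step, and within it: producing the diffeomorphism $\psi$ with the asserted target interval; dealing with two-sided components ($\ell=-\infty$) on the same footing; checking that the explicitly prescribed $S_t$ really correspond under $U$ to the canonical right inverses of the translation semigroup and not to a twisted variant; and controlling the weight $v$ --- assembled from $\rho$, $|n|$ and $F$ --- near a point of $\{F=0\}$ or $\partial\Omega$. The structural hypotheses a) and b) are precisely what make these verifications go through, b) guaranteeing in particular that the phase $\arg n(\tau)=\int_0^\tau\Im h(\varphi(s,x_0))\,ds$ stays under control along the flow.
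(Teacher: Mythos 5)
Your proposal is correct in substance, but it takes a genuinely different route from the paper's proof of (i)$\Rightarrow$(ii). The paper works entirely inside $L^p_\rho(\Omega)$: it introduces the weights $\rho_{t,p},\rho_{-t,p}$, compares their values on a compact interval $I$ with the value at one endpoint via \cite[Lemma 7]{Ka09} (inequality (\ref{inequality})), proves the auxiliary Lemma \ref{aux} giving the uniform bounds $\int_0^\infty\chi_{\varphi(t,I)}(y)\,d\lambda(t)\le C$ and $\int_0^\infty\chi_I(\varphi(t,y))\,d\lambda(t)\le C$, converts chaos into $\int_0^\infty\rho_{\pm t,p}(\beta)\,d\lambda(t)<\infty$ via \cite{Ka14}, and then checks Pettis integrability separately for $p=1$ (norm integrability) and $p>1$ (scalar integrability against $L^q_\rho(\Omega)$ plus \cite[Theorem 4.5]{MaPe11}). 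You instead conjugate the restriction of $T_{F,h}$ to each component $J$ of $\Omega\setminus\{F=0\}$ onto the translation semigroup on $L^p_v((\ell,\infty))$ via the time change $\psi$ and the cocycle $n$; note that your weight $v$ is exactly $\exp(-p\int_{x_0}^{\psi^{-1}(\tau)}\Re h/F)\,\rho(\psi^{-1}(\tau))|F(\psi^{-1}(\tau))|$, so ``chaos implies $v\in L^1$'' is precisely the criterion of \cite{Ka14} quoted in the Remark (and the finiteness statements (\ref{Pettis2}), (\ref{Pettis5}) in the paper's proof), and your check that the prescribed $S_t$ become the canonical right inverses is the same cocycle computation the paper uses for hypothesis i) of Theorem \ref{FHC}. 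What your route buys is a uniform treatment of $p=1$ and $p>1$ and the replacement of Lemma \ref{aux} by the trivial observation that $\{t\in M:\ \tau\pm t\in\mathrm{supp}\,g\}$ has measure at most $\lambda(\mathrm{supp}\,g)$; what it costs is the conjugacy bookkeeping (behaviour of $\psi$ at the endpoints of $J$ --- which needs, besides forward invariance of $\Omega$, also that the flow cannot reach an interior zero of $F$ in finite time --- and the two-sided components $\ell=-\infty$).

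Two steps should be tightened. First, exhibiting the candidate integrals $\tau\mapsto\int_M g(\tau+t)\,dt$ and $\tau\mapsto\int_M\chi_{(\ell+t,\infty)}(\tau)g(\tau-t)\,dt$ as elements of $L^p_v((\ell,\infty))$ is not yet Pettis integrability: you must also verify $\int_0^\infty|\langle\phi,\cdot\rangle|\,dt<\infty$ for every $\phi\in L^q_v$ and identify the weak integrals by Fubini; this is routine (H\"older together with $\int_{(\ell,\beta]}v<\infty$, respectively $\int_{[\alpha,\infty)}v<\infty$), but it is exactly the content the paper supplies before invoking \cite[Theorem 4.5]{MaPe11}, so it should be said. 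Second, for a two-sided component the implication ``chaos of the translation semigroup on $L^p_v(\R)$ implies $v\in L^1(\R)$'' is not literally \cite[Proposition 3.3]{MaPe11}; it does follow from a single nonzero periodic point together with admissibility of $v$, or directly from \cite{Ka14}, which is the reference the paper relies on. Finally, your closing remark about hypothesis b) is misleading: the phase of $n$ never enters your argument (only $|n|$ occurs in $v$, and $U$ is isometric regardless), so b) is not actually used in your proof of (i)$\Rightarrow$(ii); a) is what you use to obtain $\lambda(\{F=0\})=0$, while b) enters only through the results of \cite{Ka14} underlying the characterization of chaos.
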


\begin{rem}
\begin{rm}
It is shown in \cite{Ka14} that under the hypothesis of Theorem \ref{equivalence} the $C_0$-semigroup $T_{F,h}$ is chaotic if and only if $\lambda(\{F=0\})=0$ and for each connected component $C$ of $\Omega\backslash\{F=0\}$
\[\int_C \exp(-p\int_x^w \frac{\Re h(y)}{F(y)}dy)\rho(w)d\lambda(w)<\infty\]
for some/all $x\in C$.
\end{rm}
\end{rem}

For the special case of the translation semigroup on $L^p_\rho(\R_+)$, i.e.\ $F(x)=1$ and $h=0$ so that $\varphi(t,x)=x+t$ and $h_t=1$, the above result is \cite[Proposition 3.3]{MaPe11}. The proof of Theorem \ref{equivalence} is along the same lines as the proof of \cite[Proposition 3.3]{MaPe11}, although in the general case, it is more involved. We begin by providing the following auxiliary result. 

\begin{lem}\label{aux}
Let $\Omega\subseteq\R$ be open and forward invariant under $F\in C^1(\Omega)$. Then, for every compact interval $I\subset\Omega\backslash\{F=0\}$ there is $C>0$ such that
\[\int_{[0,\infty)}\chi_{\varphi(t,I)}(y)d\lambda(t)\leq C\mbox{ and }\int_{[0,\infty)}\chi_I(\varphi(t,y))d\lambda(t)\leq C\]
for every $y\in\Omega$. 
\end{lem}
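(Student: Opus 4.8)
The idea is to fix a compact interval $I=[c,d]\subset\Omega\setminus\{F=0\}$ and to reduce both integrals to a single computation by a change of variables along the flow. Since $I$ is compact and disjoint from $\{F=0\}$, and $F$ is continuous, $F$ has constant sign on $I$ and $|F|\geq\delta>0$ on $I$ for some $\delta$; without loss of generality assume $F>0$ on $I$ (the case $F<0$ is symmetric).

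The first step is to rewrite $\int_{[0,\infty)}\chi_I(\varphi(t,y))\,d\lambda(t)$. For a fixed $y\in\Omega$, the set $\{t\geq 0:\varphi(t,y)\in I\}$ is the preimage under the (monotone, since $\dot x=F(x)>0$ on $I$) curve $t\mapsto\varphi(t,y)$ of the interval $I$, hence it is itself an interval $[t_1,t_2]$ (possibly empty). On this interval $\frac{d}{dt}\varphi(t,y)=F(\varphi(t,y))\geq\delta$, so substituting $x=\varphi(t,y)$ gives
\[
\int_{[0,\infty)}\chi_I(\varphi(t,y))\,d\lambda(t)=\int_{I\cap\{\varphi(\cdot,y)\}}\frac{dx}{F(x)}\leq\int_I\frac{dx}{F(x)}\leq\frac{\lambda(I)}{\delta}=:C.
\]
This bound is uniform in $y$, which handles the second integral.

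For the first integral $\int_{[0,\infty)}\chi_{\varphi(t,I)}(y)\,d\lambda(t)$, the key observation is that $y\in\varphi(t,I)$ if and only if $y\in\varphi(t,\Omega)$ and $\varphi(-t,y)\in I$, i.e. if and only if $\varphi(-t,y)\in I$, which (using the flow property $\varphi(-t,y)=\varphi(\cdot,y)^{-1}$ composed appropriately, or directly $\varphi(s,\varphi(-t,y))=\varphi(s-t,y)$) is equivalent to $\varphi(-t,\varphi(t,y))=y$ and $\varphi(s,y)\in I$ for $s=-t$ reversed — more cleanly: $y\in\varphi(t,I)$ iff there is $x\in I$ with $\varphi(t,x)=y$. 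Since $\varphi(t,\cdot)$ is injective, this $x$ is unique and equals $\varphi(-t,y)$ when defined. So $\chi_{\varphi(t,I)}(y)=\chi_I(\varphi(-t,y))\chi_{\varphi(t,\Omega)}(y)$, and the substitution $s=-t$ together with the same monotonicity-and-change-of-variables argument as above (now the curve $t\mapsto\varphi(-t,y)$ runs backwards through $I$ with speed $\geq\delta$) yields the same bound $C=\lambda(I)/\delta$.

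The only real subtlety is bookkeeping around the domains of definition: one must check that whenever $\varphi(t,x)=y$ for some $x\in I\subset\Omega$, the backward flow $\varphi(-t,y)$ is defined and equals $x$, which follows from the facts recalled before Lemma~\ref{aux} ($\varphi(t,\cdot)$ is injective and $\varphi(-s,\cdot)=\varphi(s,\cdot)^{-1}$ on $\varphi(s,\Omega)$). I expect the main obstacle to be stating the change of variables cleanly enough that it covers both integrals simultaneously and uniformly in $y$; once one notes that in both cases the relevant set of times maps diffeomorphically onto a sub-interval of $I$ with Jacobian $1/F$ bounded by $1/\delta$, the estimate is immediate. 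Taking $C:=\lambda(I)/\min_{x\in I}|F(x)|$ works for every $y\in\Omega$, completing the proof.
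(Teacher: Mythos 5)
Your proof is correct and rests on the same mechanism as the paper's: the time a (forward or backward) trajectory spends in $I$ is controlled by a change of variables along the flow, giving the uniform bound $\int_I \frac{1}{|F|}\,d\lambda\le\lambda(I)/\delta$, which is exactly the quantity behind the paper's estimates (the paper's traversal time $s$ equals $\int_I d\lambda/|F|$). The only difference is organizational: for the first integral you work directly with the backward flow via $\chi_{\varphi(t,I)}(y)=\chi_I(\varphi(-t,y))\chi_{\varphi(t,\Omega)}(y)$ and the monotonicity of $t\mapsto\varphi(-t,y)$, whereas the paper parametrizes by the orbit of the endpoint $a$ and distinguishes the cases $y\le b$ and $y>b$, arriving at the same bound.
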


\begin{proof}
Let $I=[a,b]$. Because $\partial_2\varphi(t,x)>0$ by (\ref{positive}) we have $\varphi(t,I)=[\varphi(t,a),\varphi(t,b)]$. It is easy to check that
\[\chi_{\varphi(t,I)}(y)=\chi_{(-\infty,y]}(\varphi(t,a))\chi_{[y,\infty)}(\varphi(t,b))\]
so that
\begin{equation}\label{integral1}
\int_{[0,\infty)}\chi_{\varphi(t,I)}(y)d\lambda(t)=\int_{[0,\infty)}\chi_{(-\infty,y]}(\varphi(t,a))\chi_{[y,\infty)}(\varphi(t,b))d\lambda(t).
\end{equation}
We now consider the case when $F_{|I}>0$. Then there is a unique $s>0$ such that $\varphi(s,a)=b$ so that with (\ref{integral1}) and $0<F(\varphi(t,a))=\partial_1\varphi(t,a)$ we obtain by an application of the Transformation Formula for Lebesgue integrals
\begin{eqnarray}\label{integral2}
\int_{[0,\infty)}\chi_{\varphi(t,I)}(y)d\lambda(t)&=&\int_{[0,\infty)}\chi_{(-\infty,y]}(\varphi(t,a))\chi_{[y,\infty)}(\varphi(s,\varphi(t,a)))d\lambda(t)\nonumber\\
&=&\int_{[0,\infty)}\chi_{(-\infty,y]}(\varphi(t,a))\chi_{[y,\infty)}(\varphi(s,\varphi(t,a)))\frac{F(\varphi(t,a))}{F(\varphi(t,a))}d\lambda(t)\\
&=&\int_{[a,\omega)}\chi_{(-\infty,y]}(r)\chi_{[y,\infty)}(\varphi(s,r))\frac{1}{F(r)}d\lambda(r),\nonumber
\end{eqnarray}
where $\omega=\lim_{t\rightarrow\infty}\varphi(t,a)$. Clearly, if $y\notin [a,\omega)$ we have
\[\chi_{(-\infty,y]}(r)\chi_{[y,\infty)}(\varphi(s,r))=0,\]
so that without loss of generality we may assume $y\in [a,\omega)$. If $y\leq b$ we obviously have $\chi_{(-\infty,y]}(r)\leq\chi_{(-\infty,b]}(r)$ so that we further obtain
\begin{equation}\label{integral3}
\int_{[a,\omega)}\chi_{(-\infty,y]}(r)\chi_{[y,\infty)}(\varphi(s,r))\frac{1}{F(r)}d\lambda(r)\leq\int_{[a,b]}\frac{1}{F(r)}d\lambda(r).
\end{equation}
Now, if $y>b$ we derive from $y\in [a,\omega)$ and $b=\varphi(s,a)$ that for some $u>s$ we have $y=\varphi(u,a)=\varphi(s,\varphi(u-s,a))$, in particular $y\in\varphi(s,\Omega)$. Taking into account that $\varphi(-s,y)\geq\varphi(-s,b)=a$ this gives
\begin{eqnarray}\label{integral4}
\int_{[a,\omega)}\chi_{(-\infty,y]}(r)\chi_{[y,\infty)}(\varphi(s,r))\frac{1}{F(r)}d\lambda(r)&=&\int_{[a,\omega)}\chi_{(-\infty,y]}(r)\chi_{[\varphi(-s,y),\infty)}(r)\frac{1}{F(r)}d\lambda(r)\nonumber\\
&=&\int_{[\varphi(-s,y),y]}\frac{1}{F(r)}d\lambda(r)\nonumber\\
&=&\int_{\varphi(\cdot,y)([-s,0])}\frac{1}{F(r)}d\lambda(r)\\
&=&\int_{[-s,0]}\frac{1}{F(\varphi(r,y))}\partial_1\varphi(r,y)d\lambda(r)\nonumber\\
&=&s,\nonumber
\end{eqnarray}
where we invoked the Transformation Formula as well as $\partial_1\varphi(s,y)=F(\varphi(s,y))$ once more. Combining (\ref{integral1})-(\ref{integral4}) we obtain in case of $F_{|I}>0$
\[\int_{[0,\infty)}\chi_{\varphi(t,I)}(y)d\lambda(t)\leq\max\{\int_I\frac{1}{F(r)}d\lambda(r),s\}.\]

Let us now turn to the case $F_{|I}<0$. Then there is a unique $s>0$ with $\varphi(s,b)=a$. Repeating the above calculations mutatis mutandis gives
\[\int_{[0,\infty)}\chi_{\varphi(t,I)}(y)d\lambda(t)\leq\max\{\int_I\frac{1}{-F(r)}d\lambda(r),s\},\]
which proves that
\[\{\int_{[0,\infty)}\chi_{\varphi(t,I)}(y)d\lambda(t); y\in\Omega\}\]
is bounded above.

In order to prove that
\[\{\int_{[0,\infty)}\chi_I(\varphi(t,y))d\lambda(t); y\in\Omega\}\]
is bounded above as well, we observe that if $\varphi(t,y)\in I$ for some $t\geq 0$ then $y$ belongs to the connected component of $\Omega\backslash\{F=0\}$ containing $I$. Thus, unless $\chi_I(\varphi(t,y))=0$ for every $t\geq 0$ we have with $J=\{\varphi(t,y);\,t\geq 0\}$
\begin{eqnarray*}
\int_{[0,\infty)}\chi_I(\varphi(t,y))d\lambda(t)&=&\int_{[0,\infty)}\chi_I(\varphi(t,y))\frac{|\partial_1\varphi(t,y)|}{|F(\varphi(t,y))|}d\lambda(t)\\
&=&\int_J \chi_I(r)\frac{1}{|F(r)|}d\lambda(r)\\
&\leq&\int_I\frac{1}{|F(r)|}d\lambda(r).
\end{eqnarray*}
Since the last intergal above is independent of $y$ this proves the lemma.
\end{proof}

{\it Proof of Theorem \ref{equivalence}}. If $T_{F,h}$ satisfies the Frequent Hypercyclicity criterion $T_{F,h}(t_0)$ is chaotic for every $t_0>0$ by \cite[Proposition 2.6]{MaPe11}. Thus, we only have to show that i) implies ii).

So let $T_{F,h}$ be chaotic. It follows from \cite[Theorem 1]{Ka14} that $\lambda(\{F=0\})=0$ so that
\[X_0:=\mbox{span}\{\chi_I;\,I\subseteq\Omega\backslash\{F=0\}\mbox{ compact interval}\},\]
is a dense subspace of $L^p_\rho(\Omega)$.

We define for $x\in\Omega$, $p\geq 1$, and $t\geq 0$
\begin{eqnarray*}
	\rho_{t,p}(x)&=&\chi_{\varphi(t,\Omega)}(x)|h_t(\varphi(-t,x))|^p\exp(\int_0^{-t}F'(\varphi(s,x))ds)\,\rho(\varphi(-t,x))\\
	&=&\chi_{\varphi(t,\Omega)}(x)\exp(p\int_0^t\Re h(\varphi(s,\varphi(-t,x)))ds)\exp(\int_0^{-t}F'(\varphi(s,x))ds)\,\rho(\varphi(-t,x))
\end{eqnarray*}
as well as
\begin{eqnarray*}
	\rho_{-t,p}(x)&=&|h_t(x)|^{-p}\exp(\int_0^t F'(\varphi(s,x))ds)\,\rho(\varphi(t,x))\\
	&=&\exp(-p\int_0^t\Re h(\varphi(s,x))ds)\exp(\int_0^t F'(\varphi(s,x))ds)\,\rho(\varphi(t,x)).
\end{eqnarray*}
Then $\rho_{0,p}=\rho$ and $\rho_{t,p}\geq 0$ for every $t\in\R$, and for fixed $x\in\Omega$ the mapping $t\mapsto\rho_{t,p}(x)$ is Lebesgue measurable.

By \cite[Lemma 7]{Ka09}, for every $[a,b]\subseteq\Omega\backslash\{F=0\}$ there is $C>0$ such that
\begin{equation}\label{inequality}
\forall\,t\in\R, x\in [a,b]: \frac{1}{C}\rho_{t,p}(\alpha)\leq \rho_{t,p}(x)\leq C\rho_{t,p}(\beta),
\end{equation} 
where $\alpha:=a$ and $\beta:=b$ in case of $F_{|[a,b]}>0$, respecitvely $\alpha:=b$ and $\beta:=a$ in case of $F_{|[a,b]}<0$.

For a compact interval $I\subset\Omega\backslash\{F=0\}$ and $t\geq 0$ we set
\[S_t\chi_I:=\chi_{\varphi(t,\Omega)}(\cdot)h_t(\varphi(-t,\cdot))^{-1}\chi_I(\varphi(-t,\cdot)).\]
Since for every $x\in\varphi(t,\Omega)\backslash\{F=0\}$ we have $\partial_2\varphi(t,\varphi(-t,x))\partial_2\varphi(-t,x)=1$ and $\varphi(-t,x)>0$ it follows
\begin{eqnarray}\label{well-defined}
\|S_t\chi_I\|^p&=&\int_\Omega\chi_{\varphi(t,\Omega)}(x)|h_t(\varphi(-t,x))|^{-p}\chi_I(\varphi(-t,x))\rho(x)d\lambda(x)\nonumber\\
&=&\int_\Omega\chi_{\varphi(t,\Omega)}(\varphi(t,\varphi(-t,x))|h_t(\varphi(-t,x))|^{-p}\chi_I(\varphi(-t,x))\partial_2\varphi(t,\varphi(-t,x))\nonumber\\
&&\cdot\rho(\varphi(t,\varphi(-t,x))\partial_2\varphi(-t,x)d\lambda(x)\nonumber\\
&=&\int_\Omega |h_t(y)|^{-p}\chi_I(y)\partial_2(\varphi(t,y))\rho(\varphi(t,y))d\lambda(y)\\
&=&\int_\Omega |h_t(y)|^{-p}\chi_I(y)\exp(\int_0^tF'(\varphi(s,y)ds)\rho(\varphi(t,y))d\lambda(y)\nonumber\\
&=&\int_I\rho_{-t,p}(y)d\lambda(y)\nonumber\\
&\leq& C\rho_{-t,p}(\beta),\nonumber
\end{eqnarray}
where $C>0$ depends on $I$, and $\beta=\sup I$ if $F_{|I}>0$, respecitvely $\beta=\inf I$ if $F_{|I}<0$ by (\ref{inequality}). Thus, $S_t\chi_I\in L^p_\rho(\Omega)$. It is easy to see that $S_t\chi_I= S_t\chi_{I_0}+S_t\chi_{I\backslash I_0}$ for closed intervals $I_0\subseteq I\subseteq\Omega\backslash\{F=0\}$. Hence, we can extend $S_t$ to a linear mapping from $X_0$ into $L^p_\rho(\Omega)$.

Moreover, it is straight forward to check that for a closed and  bounded interval $I\subseteq\Omega\backslash\{F=0\}$ we have
\[\forall\,t>0:\,T_{F,h}(t)S_t\chi_I=\chi_I\]
as well as
\[\forall\,r>t>0:\,T_{F,h}(t)S_r\chi_I=S_{r-t}\chi_I,\]
so that by the linearity of $T_{F,h}(t)$ and $S_t$ on $X_0$ hypothesis i) of the Frequent Hypercyclicity criterion is satisfied.

In order to show that conditions ii) and iii) of the Frequent Hypercyclicity criterion also hold we observe that by the linearity of $T_{F,h}(t)$ and $S_t$ on $X_0$ we have to show the Pettis integrabiliy of
\[[0,\infty)\rightarrow L^p_\rho(\Omega),t\mapsto T_{F,h}(t)\chi_I\mbox{ and }[0,\infty)\rightarrow L^p_\rho(\Omega),t\mapsto S_t\chi_I\]
for every compact interval $I\subset\Omega\backslash\{F=0\}$. In order to do so, we distinguish two cases.

First we assume $p=1$. From (\ref{well-defined}) we then obtain
\[\int_{[0,\infty)}\|S_t\chi_I\|d\lambda(t)\leq C\int_{[0,\infty)}\rho_{-t,1}(\beta)d\lambda(t).\]
Since $T_{F,h}$ is chaotic, it follows from \cite[Theorem 1 and Remark 6]{Ka14} that the integral on the above right hand side is finite. Thus,
\[[0,\infty)\rightarrow L^1_\rho(\Omega),t\mapsto S_t\chi_I\]
is in particular Pettis integrable. Moreover, for a compact interval $I\subseteq\Omega\backslash\{F=0\}$ we derive
\begin{eqnarray*}
\|T_{F,h}(t)\chi_I\|&=&\int_\Omega |h_t(x)|\chi_I(\varphi(t,x))\rho(x)d\lambda(x)\\
&=&\int_\Omega |h_t(\varphi(-t,\varphi(t,x)))|\chi_I(\varphi(t,x))\rho(\varphi(-t,\varphi(t,x)))\\
&&\cdot\partial_2\varphi(-t,\varphi(t,x))\partial_2\varphi(t,x)d\lambda(x)\\
&=&\int_{\varphi(t,\Omega)}|h_t(\varphi(-t,x))|\chi_I(x)\partial_2\varphi(-t,x)\rho(\varphi(-t,x))d\lambda(x)\\
&=&\int_I\rho_{t,1}(x)d\lambda(x)\\
&\leq& C\rho_{t,1}(\beta),
\end{eqnarray*}
where $C>0$ depends on $I$ and where we used again $\partial_2\varphi(-t,\varphi(t,x))\partial_2\varphi(t,x)=1$ and (\ref{inequality}), and where again $\beta=\sup I$ in case of $F_{|I}>0$, respectively $\beta=\inf I$ in case of $F_{|I}<0$. Therefore, we obtain
\[\int_{[0,\infty)}\|T_{F,h}(t)\chi_I\|d\lambda(t)\leq C\int_{[0,\infty)}\rho_{t,1}(\beta)d\lambda(t).\]
Since $T_{F,h}$ is chaotic it follows from \cite[Theorem 1 and Remark 6]{Ka14} that the integral on the right hand side of the above inequality is finite so that
\[[0,\infty)\rightarrow L^1_\rho(\Omega),t\mapsto T_{F,h}(t)\chi_I\]
is in particular Pettis integrable. So in case $p=1$ the theorem is proved.

To finish the proof we now consider the case $p>1$. Let $q\in (1,\infty)$ be the exponent conjugate to $p$, i.e.\ $1/p+1/q=1$. We fix $g\in L^q_\rho(\Omega)$. For a compact interval $I\subset\Omega\backslash\{F=0\}$ it follows for $t>0$, applying the Transformation Formula again,
\begin{eqnarray*}
\langle g, S_t\chi_I\rangle&=&\int_\Omega g(x)\chi_{\varphi(t,\Omega)}(x)h_t(\varphi(-t,x))^{-1}\chi_I(\varphi(-t,x))\rho(x)d\lambda(x)\\
&=&\int_{\varphi(t,\Omega)}g(x)h_t(\varphi(-t,x))^{-1}\chi_I(\varphi(-t,x))\rho(x)d\lambda(x)\\
&=&\int_\Omega g(\varphi(t,x))h_t(x)^{-1}\chi_I(x)\rho(\varphi(t,x))|\partial_2\varphi(t,x)|d\lambda(x).
\end{eqnarray*}
Since $\partial_2\varphi(t,x)=\exp(\int_0^t F'(\varphi(s,x))ds)$ we obtain for some constant $C>0$ by an application of Fubini's Theorem and H\"older's Inequality
\begin{eqnarray}\label{Pettis1}
\int_{[0,\infty)}|\langle g, S_t\chi_I\rangle|d\lambda(t)&\leq&\int_{[0,\infty)}\int_I|g(\varphi(t,x))||h_t(x)^{-1}|\rho(\varphi(t,x))\partial_2\varphi(t,x)d\lambda(x)d\lambda(t)\nonumber\\
&=&\int_I\int_{[0,\infty)}|g(\varphi(t,x))||h_t(x)^{-1}|\rho(\varphi(t,x))\partial_2\varphi(t,x)d\lambda(t)d\lambda(x)\nonumber\\
&\leq&\int_I\Big(\int_{[0,\infty)}|g(\varphi(t,x))|^q\rho(\varphi(t,x))\partial_2\varphi(t,x)d\lambda(t)\Big)^{\frac{1}{q}}\nonumber\\
&&\cdot\Big(\int_{[0,\infty)}|h_t(x)|^{-p}\rho(\varphi(t,x))\partial_2\varphi(t,x))d\lambda(t)\Big)^{\frac{1}{p}}d\lambda(x)\nonumber\\
&=&\int_I\Big(\int_{[0,\infty)}|g(\varphi(t,x))|^q\rho(\varphi(t,x))\partial_2\varphi(t,x)d\lambda(t)\Big)^{\frac{1}{q}}\\
&&\cdot\Big(\int_{[0,\infty)}\rho_{-t,p}(x)d\lambda(t)\Big)^{\frac{1}{p}}d\lambda(x)\nonumber\\
&\leq&\int_I\Big(\int_{[0,\infty)}|g(\varphi(t,x))|^q\rho(\varphi(t,x))\partial_2\varphi(t,x)d\lambda(t)\Big)^{\frac{1}{q}}d\lambda(x)\nonumber\\
&&\cdot C^{\frac{1}{p}}\Big(\int_{[0,\infty)}\rho_{-t,p}(\beta)d\lambda(t)\Big)^{\frac{1}{p}},\nonumber
\end{eqnarray}
where we used (\ref{inequality}) in the last step with $\beta=\sup I$ in case of $F_{|I}>0$, respectively $\beta=\inf I$ in case of $F_{|I}<0$. Since $T_{F,h}$ is chaotic, we have by \cite[Theorem 1 and Remark 6]{Ka14}
\begin{equation}\label{Pettis2}
\int_{[0,\infty)}\rho_{-t,p}(\beta)d\lambda(t)<\infty.
\end{equation}
Using H\"older's Inequality, Fubini's Theorem, and the Transformation Formula for Lebesgue integrals we further have
\begin{eqnarray}\label{Pettis3}
&&\int_I\Big(\int_{[0,\infty)}|g(\varphi(t,x))|^q\rho(\varphi(t,x))\partial_2\varphi(t,x)d\lambda(t)\Big)^{\frac{1}{q}}d\lambda(x)\nonumber\\
&\leq&\lambda(I)^{\frac{1}{p}}\Big(\int_I\int_{[0,\infty)}|g(\varphi(t,x))|^q\rho(\varphi(t,x))\partial_2\varphi(t,x)d\lambda(t)d\lambda(x)\Big)^{\frac{1}{q}}\nonumber\\
&=&\lambda(I)^{\frac{1}{p}}\Big(\int_{[0,\infty)}\int_{\varphi(t,I)}|g(x)|^q\rho(x)d\lambda(x)d\lambda(t)\Big)^{\frac{1}{q}}\\
&=&\lambda(I)^{\frac{1}{p}}\Big(\int_\Omega\int_{[0,\infty)}\chi_{\varphi(t,I)}(x)d\lambda(t)\;|g(x)|^q\rho(x)d\lambda(x)\Big)^{\frac{1}{q}}\nonumber\\
&\leq& \lambda(I)^{\frac{1}{p}}C^{\frac{1}{q}}\Big(\int_\Omega|g(x)|^q\rho(x)d\lambda(x)\Big)^{\frac{1}{q}},\nonumber
\end{eqnarray}
for some constant $C>0$ by Lemma \ref{aux}. Combining (\ref{Pettis1}), (\ref{Pettis2}), and (\ref{Pettis3}) we conclude
\[\forall\,g\in L^q_\rho(\Omega):\,[0,\infty)\rightarrow\C,t\mapsto\langle g,S_t \chi_I\rangle\mbox{ is integrable}\]
which imlpies by \cite[Theorem 4.5]{MaPe11} the Pettis integrability of $t\mapsto S_t\chi_I$.

Finally, let $g\in L^q_\rho(\Omega)$ and let $I\subset\Omega\backslash\{F=0\}$ be a compact interval. As above, one derives for $t\geq 0$
\[\langle g,T_{F,h}(t)\chi_I\rangle=\int_I\chi_{\varphi(t,\Omega)}(x)g(\varphi(-t,x))h_t(\varphi(-t,x))\rho(\varphi(-t,x))\partial_2\varphi(-t,x)d\lambda(x),\]
so that because $\partial_2\varphi(-t,x)=\exp(-\int_{-t}^0F'(\varphi(s,x))ds)$ for $x\in\varphi(t,\Omega)$ by the same kind of arguments as above
\begin{eqnarray}\label{Pettis4}
\int_{[0,\infty)}|\langle g,T_{F,h}\chi_I\rangle|d\lambda(t)&\leq&\int_I\Big(\int_{[0,\infty)}\chi_{\varphi(t,\Omega)}(x)|g(\varphi(-t,x))|^q\partial_2\varphi(-t,x)\rho(\varphi(-t,x))d\lambda(t)\Big)^{\frac{1}{q}}\nonumber\\
&&\cdot\Big(\int_{[0,\infty)}\chi_{\varphi(t,\Omega)}(x)|h_t(\varphi(-t,x))|^p\partial_2\varphi(-t,x)\rho(\varphi(-t,x))d\lambda(x)\Big)^{\frac{1}{p}}d\lambda(x)\nonumber\\
&=&\int_I\Big(\int_{[0,\infty)}\chi_{\varphi(t,\Omega)}(x)|g(\varphi(-t,x))|^q\partial_2\varphi(-t,x)\rho(\varphi(-t,x))d\lambda(t)\Big)^{\frac{1}{q}}\\
&&\cdot\Big(\int_{[0,\infty)}\rho_{t,p}(x)d\lambda(t)\Big)^{\frac{1}{p}}d\lambda(x)\nonumber\\
&\leq&\int_I\Big(\int_{[0,\infty)}\chi_{\varphi(t,\Omega)}(x)|g(\varphi(-t,x))|^q\partial_2\varphi(-t,x)\rho(\varphi(-t,x))d\lambda(t)\Big)^{\frac{1}{q}}\nonumber\\
&&\cdot C^\frac{1}{p}\Big(\int_{[0,\infty)}\rho_{t,p}(\beta)\Big)^{\frac{1}{p}}\nonumber,
\end{eqnarray}
where we again used (\ref{inequality}) in the last step with $\beta=\sup I$ in case of $F_{|I}>0$, respectively $\beta=\inf I$ in case of $F_{|I}<0$. Since $T_{F,h}$ is chaotic, we have by \cite[Theorem 1 and Remark 6]{Ka14}
\begin{equation}\label{Pettis5}
\int_{[0,\infty)}\rho_{t,p}(\beta)d\lambda(t)<\infty.
\end{equation}
Continuing as above
\begin{eqnarray*}
&&\int_I\Big(\int_{[0,\infty)}\chi_{\varphi(t,\Omega)}(x)|g(\varphi(-t,x))|^q\partial_2\varphi(-t,x)\rho(\varphi(-t,x))d\lambda(t)\Big)^{\frac{1}{q}}\\
&\leq&\lambda(I)^{\frac{1}{p}}\Big(\int_{[0,\infty)}\int_{\varphi(t,\Omega)}\chi_I(\varphi(t,\varphi(-t,x)))|g(\varphi(-t,x))|^q\rho(\varphi(-t,x))\partial_2\varphi(-t,x)d\lambda(x)d\lambda(t)\Big)^{\frac{1}{p}}\\
&=&\lambda(I)^{\frac{1}{p}}\Big(\int_\Omega\int_{[0,\infty)}\chi_I(\varphi(t,x))d\lambda(t)\,|g(x)|^q\rho(x)d\lambda(x)\Big)^{\frac{1}{q}}\\
&\leq&\lambda(I)^{\frac{1}{p}}C^{\frac{1}{p}}\Big(\int_\Omega|g(x)|^q\rho(x)d\lambda(x)\Big)^{\frac{1}{q}},
\end{eqnarray*}
for some suitable constant $C>0$ by Lemma \ref{aux}. Combining this with (\ref{Pettis4}) and (\ref{Pettis5}) gives
\[\forall\,g\in L^q_\rho(\Omega):\,[0,\infty)\rightarrow\C,t\mapsto\langle g,T_{F,h}(t) \chi_I\rangle\mbox{ is integrable.}\]
Thus, again by \cite[Theorem 4.5]{MaPe11} $t\mapsto T_{F,h}(t)\chi_I$ is Pettis integrable which proves the theorem.\hfill$\square$

\begin{example}
\begin{rm}
We consider $\Omega=(0,1)$ and $F(x)=-x$. Then, $(0,1)$ is forward invariant under $F$ and $\{F=0\}=\{x\in\Omega; F(x)=0\}=\emptyset$. For every $h\in C[0,1]$ for which
\begin{equation}\label{vFL}
[0,1]\rightarrow\C,x\mapsto\frac{h(x)- \Re h(0)}{x}\in L^1[0,1]
\end{equation}
it follows that hypotheses a) and b) of Theorem \ref{equivalence} are satisfied. Moreover, since $F'$ and $\Re h$ are bounded, $\rho=1$ is $p$-admissible for $F$ and $h$, so that $T_{F,h}$ is a well-defined $C_0$-semigroup on $L^p(0,1)$. Its generator is given by
\[Af(x)=-xf'(x)+h(x)f(x)\]
with domain
\[D(A)=\{f\in L^p(0,1);x\mapsto -xf'(x)+h(x)f(x)\in L^p(0,1)\},\]
where the derivative is to be understood in the distributional sense (see e.g.\ \cite[Theorem 15]{ArKaMa13}).

The first order partial differential equation
\[\frac{\partial}{\partial t}u(t,x)=-x\frac{\partial}{\partial x}u(t,x)+h(x)u(t,x),\,t>0, x\in (0,1)\]
with initial condition
\[u(0,x)=v(x),\,x\in (0,1)\]
is called the (linear) von Foerster-Lasota equation. For the corresponding $C_0$-semigroup the following are equivalent.
\begin{itemize}
	\item[i)] $\Re h(0)>-\frac{1}{p}$,
	\item[ii)] $T_{F,h}$ is hypercyclic on $L^p(0,1)$,
	\item[iii)] $T_{F,h}$ is chaotic on $L^p(0,1)$,
	\item[iv)] $T_{F,h}$ is frequently hypercyclic on $L^p(0,1)$.
\end{itemize}
Indeed, (\ref{vFL}) implies the equivalences of i) to iii) (cf.\ \cite[Theorem 27]{ArKaMa13}) while iv) follows from iii) by Theorem \ref{equivalence} and iv) trivially implies ii).

For real valued $h$ satisfying (\ref{vFL}) it was shown by Dawidowicz and Poskrobko in \cite{DaPo} that $T_{F,h}$ is strongly stable whenever $h(0)\leq -1/p$. Thus, there is a very strong dichotomie in the dynamical behavior of the von Foerster-Lasota semigroup on $L^p(0,1)$.

\end{rm}
\end{example}

\section{Weighted composition $C_0$-semigroups on Sobolev spaces}

Given a bounded interval $(a,b)$ and $F\in C^1[a,b]$ with $F(a)=0$ such that $(a,b)$ is forward invariant under $F$. Moreover, let $h\in W^{1,\infty}[a,b]$ be such that
\begin{itemize}
	\item[1)] $\forall\,x\in\{F=0\}:\,h(x)=h(a)\in\R$,
	\item[2)] the function $[a,b]\rightarrow\R, y\mapsto\frac{h(y)-h(a)}{F(y)}$ belongs to $L^\infty[a,b]$.
\end{itemize}
In \cite{ArKaMa13} it is shown that then the operator
\[A_p:\{f\in W^{1,p}[a,b];\, Ff''\in L^p[a,b]\}\rightarrow W^{1,p}[a,b], A_pf=Ff'+hf,\]
where the derivatives are taken in the distributional sense, generates a $C_0$-semigroup $S_{F,h}$ on $W^{1,p}[a,b]\, (1\leq p<\infty)$ given by
\[\forall\,t\geq 0, f\in W^{1,p}[a,b]:\,S(t)f(x)=h_t(x)f(\varphi(t,x)).\]
By \cite{ArKaMa13} this $C_0$-semigroup $S_{F,h}$ is never hypercyclic on $W^{1,p}[a,b]$. In particular, $S_{F,h}$ cannot be frequently hypercyclic nor chaotic on $W^{1,p}[a,b]$. 

Since $F(a)=0$, the closed subspace
\[W^{1,p}_*[a,b]:=\{f\in W^{1,p}[a,b];\,f(a)=0\}\]
of $W^{1,p}[a,b]$ is invariant under $S_{F,h}$ such that the restriction of $S_{F,h}$ to $W^{1,p}_*[a,b]$ defines a
$C_0$-semigroup on $W^{1,p}_*[a,b]$ which we denote again by $S_{F,h}$. Its generator is given by
\[A_{p,*}:\{f\in W^{1,p}_*[a,b];\, Ff''\in L^p[a,b]\}\rightarrow W^{1,p}[a,b], A_{p,*}f=Ff'+hf,\]
see \cite{ArKaMa13}. By \cite[Proposition 18, Theorem 20, and Proposition 24]{ArKaMa13} the above $C_0$-semigroup $S_{F,h}$ on $W^{1,p}_*[a,b]$ is linearly conjugate to the $C_0$-semigroup $T_{F,F'+h(a)}$ on $L^p(a,b)$, i.e.\ there is a continuous linear bijection $\Phi:L^p(a,b)\rightarrow W^{1,p}_*[a,b]$ such that
\[\forall\,t\geq 0:\,S_{F,h}(t)=\Phi\circ T_{F,F'+h(a)}(t)\circ\Phi^{-1}.\]
Hence, we can apply the result of the previous section once we have proved the following proposition.

\begin{prop}\label{linearly conjugate}
Let $X_1, X_2$ be Banach spaces, $T_1,T_2$ be $C_0$-semigroups on $X_1$, resp.\ $X_2$ such that $T_1$ satisfies the Frequent Hypercyclicity criterion. Moreover, assume there is a continuous linear injective operator $\Phi:X_1\rightarrow X_2$ which has dense range and which satisfies
\[\forall\,t\geq 0: T_2(t)\circ\Phi=\Phi\circ T_1(t).\]
Then, $T_2$ satisfies the Frequent Hypercyclicity criterion.
\end{prop}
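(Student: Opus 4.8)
The plan is to transport the data witnessing the Frequent Hypercyclicity criterion for $T_1$ through $\Phi$ to obtain data for $T_2$. By hypothesis there are a dense subspace $X_0\subseteq X_1$ and maps $S_t^{(1)}:X_0\rightarrow X_1$ satisfying conditions i)--iii) of Theorem \ref{FHC}. I would set $Y_0:=\Phi(X_0)$ and define $S_t^{(2)}:Y_0\rightarrow X_2$ by $S_t^{(2)}(\Phi x):=\Phi(S_t^{(1)}x)$ for $x\in X_0$. First I would check that $Y_0$ is dense in $X_2$: since $X_0$ is dense in $X_1$ and $\Phi$ is continuous with dense range, $\Phi(X_0)$ is dense in $\Phi(X_1)$, which is dense in $X_2$. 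Next, since $\Phi$ is injective, $S_t^{(2)}$ is well defined, and it is linear because $\Phi$ and $S_t^{(1)}$ are.

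The algebraic identities in i) transfer directly using the intertwining relation $T_2(t)\circ\Phi=\Phi\circ T_1(t)$: for $x\in X_0$ and $r>t>0$,
\[
T_2(t)S_t^{(2)}(\Phi x)=T_2(t)\Phi(S_t^{(1)}x)=\Phi\big(T_1(t)S_t^{(1)}x\big)=\Phi x,
\]
and similarly $T_2(t)S_r^{(2)}(\Phi x)=\Phi(T_1(t)S_r^{(1)}x)=\Phi(S_{r-t}^{(1)}x)=S_{r-t}^{(2)}(\Phi x)$.

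For the Pettis integrability conditions ii) and iii), the key point is that a continuous linear operator maps Pettis integrable functions to Pettis integrable functions, with the integral commuting with the operator. Concretely, for $x\in X_0$ the map $t\mapsto T_1(t)x$ is Pettis integrable on $[0,\infty)$; composing with the bounded operator $\Phi$, the map $t\mapsto \Phi(T_1(t)x)=T_2(t)(\Phi x)$ is Pettis integrable on $[0,\infty)$, which is condition ii) for $T_2$ on $Y_0$. The same argument applied to $t\mapsto S_t^{(1)}x$ gives condition iii) for $t\mapsto S_t^{(2)}(\Phi x)$. To justify the stability of Pettis integrability under $\Phi$: if $f:[0,\infty)\rightarrow X_1$ is Pettis integrable with indefinite integral $t\mapsto \int_0^t f$, then for every $\psi\in X_2'$ one has $\psi\circ\Phi\in X_1'$, hence $t\mapsto\langle\psi\circ\Phi, f(t)\rangle=\langle\psi,\Phi f(t)\rangle$ is integrable, and $\Phi$ applied to the Pettis integral of $f$ serves as the Pettis integral of $\Phi\circ f$ by linearity and continuity of $\Phi$ together with the defining property of the Pettis integral.

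I do not expect a serious obstacle here; the only point requiring a little care is that Theorem \ref{FHC} demands Pettis integrability on all of $[0,\infty)$ (not just on bounded intervals), but since this is assumed for $T_1$ and $S_t^{(1)}$ and the operator $\Phi$ is a fixed bounded operator independent of $t$, the transfer is immediate. Thus $Y_0$ and $(S_t^{(2)})_{t\geq0}$ witness that $T_2$ satisfies the Frequent Hypercyclicity criterion.
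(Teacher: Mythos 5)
Your proposal is correct and follows essentially the same route as the paper: transport the dense set and the maps $S_t$ through $\Phi$, use injectivity for well-definedness and dense range for density, verify the algebraic identities via the intertwining relation, and transfer Pettis integrability by composing functionals with $\Phi$ (the paper's use of the transpose $\Phi^t$ is exactly your argument with $\psi\circ\Phi\in X_1'$). No gaps.
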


\begin{proof}
By hypothesis there are a dense subset $X_0\subseteq X_1$ and maps $S_t:X_0\rightarrow X$ such that $t\mapsto T_1(t)x$ as well as $t\mapsto S_t x$ are Pettis integrable in $[0,\infty)$ for all $x\in X_0$, and such that $T_1(t)S_tx=x$ and $T_1(t)S_rx=S_{r-t}x$ whenever $x\in X_0$ and $r>t>0$. 

Since $\Phi$ has dense range, $\Phi(X_0)$ is dense in $X_2$ and the injectivity of $\Phi$ ensures that for $t\geq 0$
\[\tilde{S}_t:\Phi(X_0)\rightarrow X_2,\Phi(x)\mapsto\Phi(S_t x)\]
is well-defined. Using the transposed $\Phi^t$ of $\Phi$ it is straight forward to show that for all $x\in X_0$ the mappings $t\mapsto T_2(t)(\Phi(x))=\Phi(T_1(t)x)$ as well as $t\mapsto \tilde{S}_t(\Phi (x))=\Phi (S_tx)$ are Pettis integrable in $[0,\infty)$ for every $x\in X_0$.

Trivially, $T_2(t)\tilde{S}_t(\Phi (x))=\Phi (x)$ and $T_2(t)\tilde{S}_r(\Phi(x))=\tilde{S}_{r-t}(\Phi(x))$ for every $x\in X_0$ and $r>t>0$ so that $T_2$ satisfies the Frequent Hypercyclicity criterion.
\end{proof}

\begin{theo}\label{Sobolev}
Let $(a,b)$ be a bounded interval, $F\in C^1[a,b]$ with $F(a)=0$ such that $(a,b)$ is forward invariant under $F$.
Moreover, let $h\in W^{1,\infty}[a,b]$ be such that
\begin{itemize}
	\item[1)] $\forall\, x\in\{F=0\}:\,h(x)=h(a)\in\R$,
	\item[2)] the function $[a,b]\rightarrow\C, y\mapsto\frac{h(y)-h(a)}{F(y)}$ belongs to $L^\infty[a,b]$.
	\end{itemize}
	Then, for the $C_0$-semigroup $S_{F,h}$ on $W^{1,p}_*[a,b]$ the following are equivalent.
	\begin{itemize}
		\item[i)] $S_{F,h}$ is chaotic.
		\item[ii)] $S_{F,h}$ satisfies the Frequent Hypercyclicity criterion.
	\end{itemize}
\end{theo}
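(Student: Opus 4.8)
The plan is to transport the problem, via the linear conjugacy recalled above, to the weighted composition $C_0$-semigroup $T_{F,F'+h(a)}$ on $L^p(a,b)$, apply Theorem~\ref{equivalence} there, and transport the Frequent Hypercyclicity criterion back by means of Proposition~\ref{linearly conjugate}. The implication ii)$\Rightarrow$i) is immediate: if $S_{F,h}$ satisfies the Frequent Hypercyclicity criterion then $S_{F,h}(t_0)$ is chaotic for every $t_0>0$ by \cite[Proposition 2.6]{MaPe11}, and in particular $S_{F,h}$ is chaotic. So only i)$\Rightarrow$ii) needs an argument, and for it I would proceed as follows.

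Assume $S_{F,h}$ is chaotic on $W^{1,p}_*[a,b]$. Since $\Phi:L^p(a,b)\rightarrow W^{1,p}_*[a,b]$ is a continuous linear bijection between Banach spaces it is a topological isomorphism, so from $S_{F,h}(t)=\Phi\circ T_{F,F'+h(a)}(t)\circ\Phi^{-1}$ it follows that $T_{F,F'+h(a)}$ is chaotic on $L^p(a,b)$. I would then verify that $\tilde h:=F'+h(a)$ satisfies the hypotheses of Theorem~\ref{equivalence} on the open interval $\Omega:=(a,b)$ with weight $\rho\equiv 1$. As $F\in C^1[a,b]$, the function $\tilde h$ is continuous on $[a,b]$ and $F'$ is bounded, hence $\Re\tilde h=F'+h(a)$ is bounded, and $\rho\equiv 1$ is $p$-admissible for $F$ and $\tilde h$ (it is so whenever $\Re\tilde h$ is bounded above and $F'$ is bounded below). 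Condition b) is automatic: $F$ is real valued and $h(a)\in\R$ by 1), so $\Im\tilde h\equiv 0$ and the function $y\mapsto\Im\tilde h(y)/F(y)$ vanishes identically, trivially lying in the required $L^1$-spaces.

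The step I expect to be the real obstacle is condition a), i.e.\ that $\tilde h$ be constant on $\{F=0\}\cap(a,b)$. I would dispose of it by observing that chaos of $S_{F,h}$ already forces $\{F=0\}\cap(a,b)=\emptyset$, making a) vacuous. Indeed, if $x_0\in(a,b)$ satisfied $F(x_0)=0$, then $\varphi(t,x_0)=x_0$ for all $t\geq 0$, so $h_t(x_0)=\exp(\int_0^t h(\varphi(s,x_0))\,ds)=e^{t\,h(x_0)}=e^{t\,h(a)}$ by 1), and hence $(S_{F,h}(t)f)(x_0)=e^{t\,h(a)}f(x_0)$ for every $f\in W^{1,p}_*[a,b]$ and $t\geq 0$. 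Since $W^{1,p}[a,b]$ embeds continuously into $C[a,b]$, the evaluation $f\mapsto f(x_0)$ is a continuous linear functional on $W^{1,p}_*[a,b]$ which is onto $\C$; as $h(a)\in\R$, it maps the orbit $\{S_{F,h}(t)f;\,t\geq 0\}$ of any $f$ into the real ray $\{rf(x_0);\,r\geq 0\}$, a proper closed subset of $\C$. Thus no $f$ has dense orbit, contradicting chaoticity.

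Once a) and b) are verified, the implication i)$\Rightarrow$iii) of Theorem~\ref{equivalence} gives that $T_{F,F'+h(a)}$ satisfies the Frequent Hypercyclicity criterion on $L^p(a,b)$. Finally, $\Phi$ is a continuous linear injective operator with dense range satisfying $S_{F,h}(t)\circ\Phi=\Phi\circ T_{F,F'+h(a)}(t)$ for all $t\geq 0$, so Proposition~\ref{linearly conjugate} applies and shows that $S_{F,h}$ satisfies the Frequent Hypercyclicity criterion, which is ii). Apart from the exclusion of interior equilibria, the argument is routine bookkeeping resting on the conjugacy from \cite{ArKaMa13} and on the vanishing of $\Im\tilde h$.
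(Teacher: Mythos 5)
Your proposal is correct and follows essentially the same route as the paper: transfer chaos through the conjugacy $\Phi$ from \cite{ArKaMa13} (Comparison Principle), apply Theorem~\ref{equivalence} to $T_{F,F'+h(a)}$ on $L^p(a,b)$, and pull the Frequent Hypercyclicity criterion back via Proposition~\ref{linearly conjugate}, with ii)$\Rightarrow$i) coming from \cite[Proposition 2.6]{MaPe11}. The only difference is that you explicitly verify the hypotheses of Theorem~\ref{equivalence} for $\tilde h=F'+h(a)$ -- in particular your point-evaluation argument showing that chaos on $W^{1,p}_*[a,b]$ forces $\{F=0\}\cap(a,b)=\emptyset$, so that condition a) holds vacuously -- a detail the paper's proof leaves implicit, and a worthwhile addition since $F'$ need not be constant on a nonempty interior zero set.
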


\begin{proof}
As in the proof of Theorem \ref{FHC} we only have to show that i) implies ii). As mentioned before Propositon \ref{linearly conjugate}, under the hypothesis of the theorem, it is shown in \cite{ArKaMa13} that there is a continuous linear bijection $\Phi:L^p(a,b)\rightarrow W^{1,p}_*[a,b]$ such that
\[\forall\,t\geq 0:\,S_{F,h}(t)=\Phi\circ T_{F,F'+h(a)}(t)\circ\Phi^{-1}.\]
Now, if $S_{F,h}$ is chaotic, applying the Comparison Principle \cite{GEPe11} it follows that $T_{F,F'+h(a)}$ is chaotic on $L^p(a,b)$. Hence, by Theorem \ref{FHC}, $T_{F,F'+h(a)}$ satisfies the Frequent Hypercyclicity criterion. An application of Proposition \ref{linearly conjugate} therefore proves the theorem.
\end{proof}

\begin{example}
\begin{rm}
We consider again the linear von Foerster-Lasota equation, i.e.\
\[\frac{\partial}{\partial t}u(t,x)=-x\frac{\partial}{\partial x}u(t,x)+h(x)u(t,x),\,t>0, x\in (0,1)\]
with initial condition
\[u(0,x)=v(x),\,x\in (0,1).\]
Now, we also impose the boundary condition
\[u(t,0)=0,t>0.\]
If $h\in W^{1,p}[0,1]$ with $h(0)\in\R$ and $y\mapsto\frac{h(y)-h(0)}{y}\in L^{\infty}[0,1]$, we can invoke our $C_0$-semigroup $S_{-id,h}$ on $W^{1,p}_*[0,1]$ to obtain a solution to the above initial-boundary value problem, whenever $v\in W^{1,p}_*[0,1]$.

For the dynamical properties of this $C_0$-semigroup on $W^{1,p}_*[0,1]$ the following are equivalent.
\begin{itemize}
	\item[i)] $h(0)>1-\frac{1}{p}$,
	\item[ii)] $S_{-id,h}$ is hypercyclic on $W^{1,p}_*[0,1]$,
	\item[iii)] $S_{-id,h}$ is chaotic on $W^{1,p}_*[0,1]$,
	\item[iv)] $S_{-id,h}$ is frequently hypercyclic on $W^{1,p}_*[0,1]$.
\end{itemize}
Indeed, the equivalences of i) to iii) are shown in \cite[Theorem 27]{ArKaMa13}) while iv) follows from iii) by Theorem \ref{Sobolev} and iv) trivially implies ii).
\end{rm}
\end{example}

\end{document}